\numberwithin{equation}{section}
\theoremstyle{plain}
\newtheorem{maintheorem}{Основная теорема}
\newtheorem{propos}{Предложение}
\newtheorem{lemma}{Лемма}
\newtheorem{thN}{Теорема Р. Неванлинны}
\newtheorem{lemGS}{Лемма Гришина\,--\,Содина о малых интервалах}
\newtheorem{LemmaEF}{Лемма Эдрея\,--\,Фукса о малых дугах}
\newtheorem{ThSI}{Теорема о малых интервалах}
\newtheorem{ThSIw}{Теорема о малых интервалах с весом}
\newtheorem{ThSp}{Теорема о малых плоских подмножествах}
\newtheorem{thGM}{Теорема Гришина\,--\,Малютиной о малых интервалах}
\theoremstyle{definition}
\newtheorem{proof}{Доказательство}
\newtheorem{remark}{Замечание}
\newtheorem{definition}{Определение}
\renewcommand{\leq}{\leqslant} 
\renewcommand{\geq}{\geqslant}
\newcommand{\RR}{\mathbb{R}} 
\newcommand{\CC}{\mathbb{C}} 
\newcommand{\NN}{\mathbb{N}}
\DeclareMathOperator{\dd}{\,{\mathrm d\!}}
\DeclareMathOperator{\rad}{{\text{\tiny \rm rd}}}
\DeclareMathOperator{\mes}{mes}
\begin{document} 
\title{Интегралы  от разностей субгармонических  функций.~I. Интегральное неравенство с  характеристикой Неванлинны и модулем непрерывности  меры}
	
\author[B.\,N.~Khabibullin]{Б.\,Н.~Хабибуллин}
\address{Башкирский государственный университет}
\email{khabib-bulat@mail.ru}

\date{25.05.2021}
\udk{517.547.2 : 517.574}

 \maketitle

\begin{fulltext}

\begin{abstract} Получены новые  интегральные неравенства для интегралов от разности субгармонических функций
по мере  через их характеристику Неванлинны и некоторую функциональную характеристику меры --- её модуль непрерывности.  Эти результаты новые и для мероморфных функций.     

Библиография:  32 названия 

Ключевые слова: мероморфная функция, разность субгармонических функций, характеристика Неванлинны,  мера Рисса,  модуль непрерывности, мера Хаусдорфа


\end{abstract}

\markright{Интегралы  от разностей субгармонических  функций \dots}

	
\section{Введение}

\subsection{Цель и истоки}\label{s10}
Основные задачи всей работы и этой первой её части  --- верхние неравенства для  интегралов от разности  субгармонических функций по мерам на подмножествах  {\it комплексной плоскости\/}  $\CC$ или конечномерного  евклидова пространства  через их характеристику Неванлинны и количественные параметры меры и/или множества интегрирования. Их следствия для $\CC$ ---  новые неравенства   для интегралов   от логарифма модуля мероморфной функции по мерам. Исходная  точка исследования  --- 
лемма А.~Эдрея и В.~Фукса о малых дугах, которая нашла важные  применения в теории мероморфных функций, отражённые, например, в  
\cite[гл.~I, теорем  7.4]{GO}. Приводится она в формулировке из \cite{GO} с единственным отличием: здесь и 
далее через  $\uplambda_{\RR}$  обозначаем {\it линейную  меру Лебега\/} на $\RR$ в отличие от широко распространённого обозначения $\mes$ в \cite{GO}, \cite{GrS}, \cite{GriMal05}, \cite{GabKha20}, \cite{Kha20} и т.д.
Для мероморфной функции $f\neq 0,\infty$ на $\CC$ 
 её  {\it характеристика Неванлинны\/}  на {\it положительном луче\/} $\RR^+:=\bigl\{x\in \RR\bigm| x\geq 0\bigr\}$ 
{\it вещественной прямой\/} $\RR$ определяется  как 
\begin{subequations}\label{TN}
	\begin{align}
	T(r, f)&\underset{r\in\RR^+}{:=}m(r,f)+N(r,f),  \quad\text{где}
	\tag{\ref{TN}T}\label{{TN}T}
	\\
	m(r,f)&\underset{r\in\RR^+}{:=}\frac{1}{2\pi}\int_0^{2\pi} \ln^+\bigl|f(re^{i\varphi})\bigr| \dd \varphi , 
\quad \ln^+x\underset{x\in \RR^+}{:=}\max\{0, \ln x\},
\tag{\ref{TN}m}\label{{TN}m}\\
N(r,f)&\underset{r\in\RR^+}{:=}\int_{0}^{r}\frac{n(t,f)-n(0,f)}{t}\dd t+n(0,f)\ln r, 
	\tag{\ref{TN}N}\label{{TN}N}
	\end{align}
\end{subequations} 
а   $n(r,f)$ --- число полюсов функции  $f$ в  замкнутом круге радиуса $r\in \RR^+$ с центром в нуле, подсчитанное с учётом кратности полюсов. 

\begin{LemmaEF}[{\cite[{\bf 2}, лемма III, {\bf 9}]{EF}, \cite[теорема 7.3]{GO}}]
Пусть $f$ --- мероморфная функция, $k$ и $\delta$ --- некоторые числа, $k>1$, $0< \delta \leq 2\pi$, $r>1$. 
Существует такая постоянная $c_1(k,\delta)$, что для любого $\uplambda_{\RR}$-измеримого $E_r\subset [-\pi,\pi]$ такого, что $\uplambda_{\RR}(E_r)=\delta$, выполняется 
\begin{equation}\label{eEF}
\int_{E_{r}} \ln^+\bigl|f(re^{i\varphi})\bigr|\dd \varphi\leq c_1(k,\delta) T(kr,f),
\end{equation}    
где $c_1(k,\delta)=\frac{6k}{k-1}\delta \ln \frac{2\pi e}{\delta}\to 0$, когда $\delta \to 0$ при фиксированном $k$.  
\end{LemmaEF}
Перенос этой леммы на разности  субгармонических функций и снятие  ограничения $r>1$ дополнением правой части \eqref{eEF} слагаемым порядка $O(\ln r)$ при таком же интегрировании 
обсуждается у М. Гирныка  \cite[теорема E и ниже]{Hir09}. 

В ряде результатов для мероморфных функций $f$ на $\CC$, когда множества интегрирования  выбирались  на {\it положительной  полуоси\/} $\RR^+$, имелась возможность устанавливать  оценки 
для интегралов от положительной  части логарифма {\it максимума модуля\/} мероморфной функции 
\begin{equation}\label{{TN}M}
M(r,f)\underset{r\in \RR^+}{:=}\sup\Bigl\{\bigl|f(z)\bigr| \Bigm| |z|=r\Bigr\}. 
\end{equation}
Если учитывать такие результаты, то  лемме Эдрея\,--\,Фукса предшествовала 
\begin{thN}[{\rm  (\cite[гл. I, теорема 7.2]{GO} вместе с   обсуждением  в   \cite[Введение, 1.1]{Kha20})}] 
Пусть   $1<k\in \RR^+$, $0<r_0\in \RR^+$. Тогда существует такое число $c_0(k)\in \RR^+$, что для любой 
 мероморфной функции $f\neq 0,\infty$   на\/ $\CC$ 
\begin{equation}\label{eN}
\int_0^r\ln^+M(t,f)\dd t\leq c_0(k) T(kr,f)r \quad\text{при всех $r\geq r_0$.} 
	\end{equation}
\end{thN}
<<Прямолинейная>> версия  леммы Эдрея\,--\,Фукса о малых дугах --- 
\begin{lemGS}[{\rm \cite[лемма 3.1]{GrS}}] Существует $C\in \RR^+$, с которым  для  любых 
мероморфной   функции   $f\neq 0,\infty$, чисел  $r>1$ и $k>1$, а также  $\uplambda_{\RR}$-измеримого     $E\subset [1,r)$
выполнено неравенство 
	\begin{equation}\label{eGS}
\int_E\ln^+M(t,f)\dd t\leq 	C\frac{k}{k-1}T(kr,f)\uplambda_{\RR}( E)\ln 	
\frac{2r}{\uplambda_{\RR}( E)}.
	\end{equation}
\end{lemGS}
В совместной работе А.\,Ф.~Гришина и Т.\,И.~Малютиной  \cite{GriMal05}  
доказана \cite[теорема 8]{GriMal05} и неоднократно применяется \cite[теоремы 2, 4]{GriMal05}
версия леммы Гришина\,--\,Содина {\it для субгармонических функций формального уточнённого  порядка
 $\boldsymbol \rho$ в смысле Валирона\/} \cite[гл.~III, 6]{Valiron}, \cite[гл.~I, \S~12]{Levin56}, \cite[гл.~II, \S~2]{GO}, \cite[7.4]{BGT}, который можно  определить в эквивалентной форме  через {\it единственное условие\/}   как 
{\it  дифференцируемую функция $\boldsymbol \rho\geq 0$ на  $\RR^+\setminus \{0\}$} 
с  {\it  конечным пределом\/} $\rho :=\lim\limits_{r\to +\infty}r\bigl(\boldsymbol \rho(r)\ln r\bigr)'\in \RR^+$, что отмечено  в  
 \cite[следствие]{Kha20D}  и  для существенно  более общих {\it уточнённых функций роста}
 \cite[теорема]{Kha20D}. 
\begin{thGM}[{\rm \cite[теорема 8]{GriMal05}}] 
Пусть $v\not\equiv -\infty$ --- субгармоническая функция на $\CC$ и
$\sup\limits_{z\in \CC}v(z)|z|^{-\boldsymbol \rho (|z|)}<+\infty$ для уточнённого порядка $\boldsymbol \rho $.
Тогда существует $M\in \RR^+$, с которым для любых числа $r>1$ и $\uplambda_{\RR}$-измеримого множества
$E\subset [1,r)$ выполняется неравенство
\begin{equation}\label{estGM}
\sup_{\theta\in [0,2\pi)}\int_E\bigl|v(te^{i\theta})\bigr| \dd t\leq 
M r^{\boldsymbol \rho (r)}\uplambda_{\RR}( E) \ln \frac{4r}{\uplambda_{\RR}(E)}.
\end{equation}
 \end{thGM}

Теорема Гришина\,--\,Малютиной о малых интервалах недавно  была распространена на произвольные субгармонические функции на $\CC$ в совместной статье Л.\,А.~Габдрахмановой и автора   \cite{GabKha20},  
из результатов которой она легко следует \cite[вывод теоремы Гришина\,--\,Малютиной]{GabKha20}.   

Для {\it расширенной числовой функции\/} $v$ на окружности радиуса $r\in \RR^+$ с центром в нуле   со значениями из {\it расширенной числовой прямой\/} $\overline \RR:=\RR\cup\{\pm\infty\}$ 
\begin{subequations}\label{MC}
\begin{align}
{\sf M}_v(r)&:=\sup_{0\leq \theta < 2\pi } v(re^{i\theta}),    
\tag{\ref{MC}M}\label{Mr}\\
{\sf C}_v(r)&:=\frac{1}{2\pi}\int_0^{2\pi} v(re^{i\theta})\dd \theta
\tag{\ref{MC}C}\label{Cr}
\end{align}
\end{subequations} 
 --- {\it среднее по\/} этой {\it окружности\/} при условии интегрируемости $v$.  
\begin{ThSI}[{\cite[теорема 1]{GabKha20}}]\label{thGK} 
Существует число $a\geq 1$, с которым  для любых  субгармонической на $\CC$ функции $u\not\equiv -\infty$, 
чисел\/  $b\in (0,1]$  и $0\leq r_0\leq  r<R<+\infty$, $\uplambda_{\RR}$-измеримого подмножества $E\subset [r,R]$,
а также  функции $g\colon E\to \overline \RR$ с  существенной верхней гранью 
\begin{equation}\label{Lin}
\|g\|_\infty:=\inf \Bigl\{a\in \RR\Bigm| \uplambda_{\RR}\Bigl(\bigl\{x\in E\bigm|
g(x)>a\bigr\}\Bigr)=0 \Bigr\}\in \RR^+
\text{ на $E$}
\end{equation}
имеет место неравенство 
\begin{equation}\label{M|u|}
\int_{E}{\sf M}_{|u|} g\dd \uplambda_{\RR}
\leq \left(\frac{a}{b}\ln \frac{a}{b}\right)
 \Bigl( {\sf M}_u\bigl((1+b)R\bigr)+2{\sf C}_u^-(r_0)\Bigr) \|g\|_\infty \uplambda_{\RR}( E) \ln \frac{3beR}{ \uplambda_{\RR}( E)}.
\end{equation}
\end{ThSI}

Ещё один общий результат  установлен в  \cite[основная теорема]{Kha20}.
 Для    меры  $\mu$  на замкнутом круге радиуса $R$ с центром в нуле 
  через $\mu^{\rad}(r)$ обозначим $\mu$-меру замкнутых кругов радиуса $r\leq R$ с центром в нуле, а   
\begin{equation}\label{{murad}N}
{\sf N}_{\mu}(r,R):=\int_{r}^{R}\frac{\mu^{\rad}(t)}{t}\dd t\in \overline \RR^+:=\RR^+\cup \{+\infty\}, \quad 
0\leq r<R\leq +\infty .
\end{equation} 

Для числа или  функции 
верхний индекс $+$ определяет его положительную часть, а знак $-$ в верхнем индексе выделяет  отрицательную   часть. 

Пусть  $U=u-v$ --- разность субгармонических функций 
 $u\not\equiv -\infty$ и $ v\not\equiv -\infty$ в окрестности замкнутого круга радиуса $R$ с мерами Рисса соотвественно $\varDelta_u\geq 0$ и $\varDelta_v\geq 0$, т.\,е.  {\it $\delta$-суб\-га\-р\-м\-о\-н\-и\-ч\-е\-с\-к\-ая нетривиальная\/}  ($\not\equiv\pm\infty$) {\it функция\/}  \cite{Arsove53}, \cite{Arsove53p}, \cite{Gr}, \cite[3.1]{KhaRoz18}
с зарядом Рисса $\varDelta_{U}=\varDelta_u-\varDelta_v$. 
В наших статьях \cite{Kha20} и \cite{Kha21} использовалась 
{\it разностная  характеристика Неванлинны\/} функции $U$ вида  
\begin{equation}\label{T}
{\sf T}_U(r,R)={\sf C}_{U^+}(R)-{\sf C}_{U^+}(r)+
{\sf N}_{\varDelta_U^-}(r,R)
, \quad 0<r< R\in \RR^+, 
\end{equation}
где  мера Бореля $\varDelta_U^-
\geq 0
$ --- нижняя вариация заряда Рисса  $\varDelta_{U}$. 

Для $\lambda_{\RR}$-измеримых  $E\subset \RR$ и   $g\colon E\to \overline \RR$ наряду с существенной верхней гранью  $\|g\|_{\infty}$ на $E$ из 
\eqref{Lin} используем и $L^p$-полунорму  функции $g$ на $E$:  
\begin{equation}\label{Lpn}
 \|g\|_p:=\sqrt[p]{\int_E |g|^p\dd\uplambda_{\RR}}\quad \text{при  $1\leq p\in \RR^+$ на $E$}.
\end{equation}

\begin{ThSIw}[{\cite[основная теорема]{Kha20}}]\label{thm} 
Пусть   $0< r_0< r<+\infty$, $1<k\in \RR^+$, $\uplambda_{\RR}$-измеримы $E\subset [0,r]$ и $g\colon E\to \overline \RR$, 
 $1<p\leq \infty$,  $1/p+1/q=1$,  $U\not\equiv \pm\infty$  --- $\delta$-субгармоническая функция на  $\CC$, 
а  $u\not\equiv -\infty$ --- субгармоническая функция на $\CC$.  Тогда  
\begin{subequations}\label{1m}
\begin{align}
\int_{E} {\sf M}_{U}^+(t)g(t)\!\dd t&\leq
\frac{4qk}{k-1} \bigl({\sf T}_{U}(r_0,kr)+{\sf C}_{U^+}(r_0)\bigr) \|g\|_p
\sqrt[q]{\uplambda_{\RR}( E)} \ln\frac{4kr}{\uplambda_{\RR}( E)}, \tag{\ref{1m}T}\label{inDl+}
\\ 
\int_{E} {\sf M}_{|u|}(t)g(t)\!\dd t&\leq
\frac{5qk}{k-1} \bigl({\sf M}_{u^+}(kr)+{\sf C}_{u^-}(r_0)\bigr) \|g\|_p
\sqrt[q]{\uplambda_{\RR} (E)}\ln\frac{4kr}{\uplambda_{\RR} (E)}.
\tag{\ref{1m}M}\label{uM}
\end{align}
\end{subequations}
\end{ThSIw}

Следующий  в определённом смысле  качественно новый шаг позволяет охватить  
и  множества малой {\it плоской меры Лебега\/} $\uplambda_{\CC}$ на  комплексной плоскости.
\begin{ThSp}[{\cite[теорема 2, следствия]{Kha21}}]
Пусть   $0< r_0< r\in \RR^+$, $1<k\in \RR^+$,  $E$ --- $\uplambda_{\CC}$-измеримое подмножество в замкнутом круге
радиуса $r$ с центром в нуле,  $U\not\equiv \pm\infty$ --- $\delta$-субгармоническая  функция на $\CC$, а $u\not\equiv -\infty$ --- субгармоническая  функция на $\CC$. Тогда  
\begin{subequations}\label{1pl}
\begin{flalign}
\int_{E} U^+\dd \uplambda_{\CC} &\leq
\frac{2k}{k-1} \Bigl({\sf T}_{U}(r_0,kr)+{\sf C}_{U^+}(r_0)\Bigr)\uplambda_{\CC}(E)
\ln\frac{100kr^2}{\uplambda_{\CC} (E)},
\tag{\ref{1pl}T}\label{inDl+p}
\\ 
\int_{E} |u|\dd \uplambda_{\CC} &\leq
\frac{3k}{k-1} \Bigl({\sf M}_{u^+}(kr)+{\sf C}_{u^-}(r_0)\Bigr) \uplambda_{\CC}(E) \ln\frac{100kr^2}{\uplambda_{\CC} (E)}.
\tag{\ref{1pl}M}\label{uMp}
\end{flalign}
\end{subequations}
\end{ThSp}

\subsection{О содержании работы}
Для мероморфной функции $f\neq 0,\infty$ на $\CC$ её логарифм модуля $\ln |f|\not\equiv \pm \infty$ ---  $\delta$-субгармоническая функция на  ${\mathbb{C}}$ и все установленные ниже результаты  о $\delta$-субгармонических функциях новые и для мероморфных функций в  традиционных  обозначениях 
\begin{subequations}\label{cs}
\begin{align}
\ln M(r, f)&\overset{\eqref{{TN}M},\eqref{Mr}}{=}{\sf M}_{\ln|f|}(r), \quad r\in {\mathbb{R}}^+,
\tag{\ref{cs}M}\label{{cs}M}\\  
m(r, f)&\overset{\eqref{{TN}m},\eqref{Cr}}{=}{\sf C}_{\ln^+|f|}(r),\quad r\in {\mathbb{R}}^+,
\tag{\ref{cs}m}\label{{cs}m}\\
N(R, f)-N(r, f)&\overset{\eqref{{TN}N},\eqref{{murad}N}}{=}
{\sf N}_{\varDelta_{\ln|f|}^-}(r,R),
\quad 0<r< R\in \RR^+, 
\tag{\ref{cs}N}\label{{cs}N}\\
T(R, f)-T(r, f)&\overset{\eqref{{TN}T},\eqref{T}}{=}{\sf T}_{\ln|f|}(r,R).
\quad 0<r<R\in {\mathbb{R}}^+,
\tag{\ref{cs}T}\label{{cs}T}
\end{align}
\end{subequations}

В связи с видом круглой скобки в правых  частях \eqref{inDl+} и \eqref{inDl+p}  с двумя слагаемыми далее будет удобнее  использовать именно эту сумму как  {\it разностную характеристику Невалинны,\/} которую  можно определить через предшествующую форму  разностной характеристики Невалинны ${\sf T}_{U}$ из \eqref{T} в виде 
\begin{subequations}\label{rT}
\begin{align}
{\boldsymbol T}_U(r,R)&:={\sf T}_{U}(r,R)+{\sf C}_{U^+}(r)
\tag{\ref{rT}T}\label{{rT}T}
\\
&\overset{\eqref{T}}{=}{\sf C}_{U^+}(R)+{\sf N}_{\varDelta_U^-}(r,R),
\quad 0<r<R\in \RR^+,
\tag{\ref{rT}N}\label{{rT}N}
\\
\intertext{где правая часть теперь позволяет определить и}
{\boldsymbol  T}_U(R)&:={\boldsymbol  T}_U(0,R)\overset{\eqref{{murad}N}}{:=}{\sf C}_{U^+}(R)+{\sf N}_{\varDelta_U^-}(0,R)\in \overline \RR^+.
\tag{\ref{rT}o}\label{{rT}o}
\end{align}
\end{subequations}
В этом случае \eqref{{cs}T} согласно \eqref{{cs}m}, \eqref{{rT}N} и \eqref{{cs}N}  заменится  на 
\begin{equation}\label{csTTNN}
T(R, f)-N(r, f)={\boldsymbol  T}_{\ln|f|}(r,R),
\quad 0< r<R\in {\mathbb{R}}^+.
\end{equation}

Все итоговые оценки сверху интегралов в статье будут даваться через последний вариант  ${\boldsymbol  T}_U$ разностной характеристики Неванлинны из \eqref{rT} только для $\delta$-суб\-г\-а\-р\-м\-о\-н\-и\-ч\-е\-с\-к\-их функций, поскольку взаимосвязи  \eqref{cs}, а прежде всего  \eqref{csTTNN}, очевидным образом позволяют переформулировать их для мероморфных функций в традиционных обозначениях \eqref{TN} и \eqref{{TN}M}.
 
Оценки интегралов по подмножествам положительной полуоси от логарифмов максимума модуля \eqref{{TN}M} мероморфной функции $f$, как в теореме Р.~Неванлинны и лемме Гришина\,--\,Содина,  от функций ${\sf M}_{|u|}$ с субгармонической функцией $u$, как  в \eqref{M|u|} и \eqref{uM},  а также от  ${\sf M}_{U^+}$, как  в \eqref{inDl+}, 
в теоремах о малых интервалах   --- это  отдельная  специфическая  задача, которая тесно завязана на интегралах именно по интервалам и подмножествам $E$ в $\RR^+$, но   включает в себя суперпозицию двух  операций: точная верхняя грань по окружностям из   \eqref{{TN}M} или \eqref{Mr} с последующим интегрированием. 
Она рассмотрена  нами отдельно в \cite{Kha21A1}--\cite{Kha21A3} и намечена к объединяющей публикации в открытой печати в одном из российских журналов.
В настоящей работе  устанавливаются неравенства для интегралов от собственно  функций $\ln^+ |f|$ с мероморфной $f$ на $\CC$ или $U^+$, где $U$ --- разность субгармонических  функций, по мерам на подмножествах произвольной локации соответственно в $\CC$ или в конечномерном евклидовом пространстве размерности $\geq 2$. Это  можно трактовать как развитие и обобщение именно леммы Эдрея\,--\,Фукса о малых дугах, в какой-то мере теоремы Гришина\,--\,Малютиной о малых интервалах, а также теоремы о малых плоских подмножествах в $\CC$. 

\section{Разностная характеристика Неванлинны}

{\it Расширенная  числовая  прямая\/} $\overline \RR:=\RR\cup \{\pm \infty\}$ ---  двухточечная компактификация  $\RR$ путём  добавления двух концов $\inf \RR=:-\infty=:\sup \varnothing $ и  $\sup \RR=:+\infty=:\inf \varnothing$, где $\varnothing$ --- {\it  пустое множество}, дополненная отношениями порядка  
$-\infty \leq x\leq +\infty$ для всех $x\in \overline \RR$, и операциями
\begin{subequations}\label{actR}
\begin{align}
 -(\pm\infty)&=\mp\infty, \quad |\pm\infty|:=+\infty,
\tag{\ref{actR}$\infty$}\label{{actR}-}
 \\   
x\pm(\pm \infty)&=+ \infty \text{ при $x\in \overline \RR\!\setminus\!-\infty$},
\; x\pm (\mp \infty)=-\infty\text{ при }x\in \overline \RR\!\setminus\!+\infty,
\tag{\ref{actR}$\pm$}\label{{actR}+}
\\
x\cdot (\pm\infty)&:=\pm\infty=:(-x)\cdot (\mp\infty) \text{ при $x\in \overline \RR^+\!\setminus\!0$}, 
\tag{\ref{actR}*
}\label{{actR}.}
\\
\frac{\pm x}{0}&:=\pm\infty\text{ при  $x\in  \overline \RR^+\!\setminus\!0$}, \; 
\frac{x}{\pm\infty}:=0\text{ при  $x\in  \RR$}, 
\tag{\ref{actR}/}\label{{actR}/}\\
\text{но } 0&\cdot\pm \infty:=0=:\pm \infty \cdot 0, \text{ если не оговорено иное,}
\tag{\ref{actR}$\cdot0$}\label{{actR}0}\\
\intertext{а не определены только  пары сумм и разностей и пять операций деления} 
\nexists!\bigl( (\pm\infty)&+(\mp\infty)\bigr),  \quad \nexists!\bigl((\pm\infty)-(\pm \infty)\bigr),
\quad \nexists!\frac{0}{0},  \quad \nexists!\,\frac{\pm\infty}{\pm\infty}, \quad \nexists!\,\frac{\pm\infty}{\mp\infty},
\tag{\ref{actR}$\nexists!$}\label{{actR}ne}
\end{align}
\end{subequations}   

Одноточечные множества записываем без фигурных скобок, если это не вызывает разночтений. 

{\it Интервал\/} на $\overline \RR$ --- связное подмножество в $\overline \RR$. 
 При $a\leq b\in \overline \RR$, как обычно,  
$[a,b]:=\bigl\{x\in \overline \RR\bigm| a\leq x\leq b\bigr\}$    --- {\it отрезок\/} на  $\overline \RR$ с {\it левым концом\/} $a$ и {\it правым концом\/} $b$, $(a,b]:=[a,b]\setminus a$ --- {\it открытый слева и замкнутый справа интервал\/} на  $\overline \RR$, аналогично для  $[a,b):=[a,b]\setminus b$, $(a,b):=(a,b]\cap [a,b)$ --- {\it открытый интервал.\/}
{\it Интеграл\/} (Римана\,--\, или  Лебега\,--\,){\it Стилтьеса\/}
по интервалу с концами $a<b$  по функции ограниченной вариации $g$ на этом интервале
понимаем как интеграл по   $(a,b]\subset \overline \RR$, если не оговорено иное:
\begin{equation}\label{keyint}
\int_a^b \dots \dd g:=\int_{(a,b]} \dots \dd g.
\end{equation}

Через $x^+:=\sup \{0,x\}$ обозначаем {\it положительную часть\/} от 
 $x\in \overline \RR$, а $x^-:=(-x)^+$ --- его {\it отрицательная часть\/}.    Вообще всюду далее
{\it положительность\/} --- это $\geq 0$, а {\it отрицательность\/} --- это $\leq 0$. Если $0<x\in \overline \RR$, то $x$ {\it строго\/} положительно, а если $0>x\in \overline \RR$, то $x$ {\it строго\/} отрицательно.  
Для {\it расширенной числовой функции\/} $f\colon X\to \overline \RR$, вообще говоря,
могут быть  и не определены значения $f(x)$ для некоторых $x$.  При этом её её {\it положительная  часть\/}
 $f^+\colon x\underset{\text{\tiny $x\in X$}}{\longmapsto} \bigl(f(x)\bigr)^+$ определена в тех же точках, что и $f$, а $f^{-}:=(-f)^+$ --- её {\it отрицательная  часть.\/} Функция $f$  {\it положительна на $X$\/} и пишем $f\geq 0$, если  $f=f^+$.

Всюду далее ${\tt d}\in \NN:=\{1,2,\dots\}$, ${\tt d}\geq 2$,  --- размерность {\it евклидова пространства\/} 
$\RR^{\tt d}$ с {\it евклидовой нормой\/} $|x|:=\sqrt{x_1^2+\dots +x_{\tt d}^2}$   
для $x:=(x_1,\dots ,x_{\tt d})\in \RR^{\tt d}$, а 
$B_x(r)$, $\overline B_x(r)$ и $\partial \overline B_x(r)$ ---  соответственно {\it открытый\/} и  {\it замкнутый шар,\/} а также  {\it  сфера радиуса $r\in \RR^+$
 с центром\/} $x\in \RR^{\tt d}$. По этому определению если $r=0$, то  $B_x(r)=\varnothing$ --- {\it пустое множество.\/}  Если рассматриваются шары    или сферы   с центром в нуле, то нижний индекс $0$, как правило,  не пишем:
\begin{equation*}
B(r):=B_0(r), \quad \overline B(r):= \overline B_0(r), \quad \partial \overline B(r)=\partial \overline B_0(r).
\end{equation*} 
Пространство $\RR^{\tt 2}$ часто  отождествляем с комплексной плоскостью 
$$
\CC\ni z=x+iy \longleftrightarrow (x,y)\in \RR^{\tt 2}, \quad x,y\in \RR,
$$ 
если удаётся отвлечься от комплексной структуры $\CC$. Для комплексной плоскости $\CC$, таким образом,  $B_z(r)$, $\overline  B_z(r)$ и  $\partial \overline B_z(r)$ --- соответственно {\it открытый\/} и  {\it замкнутый круг,\/} а также  {\it  окружность радиуса $r$ с центром\/} $z\in \CC$. 

{\it Площади поверхностей единичных сфер\/} $\partial \overline  B(1)$ в $\RR^{\tt d}$ обозначаем через 
\begin{equation}\label{{kK}s}
s_{\tt d-1}=\frac{2\pi^{\tt d/2}}{\Gamma (\tt d/2)},  
\quad s_{\tt 1}=2\pi, \; s_{\tt 2}=4\pi,\;  s_{\tt 3}=\pi^2, \dots  . 
\end{equation}
Для  {\it поверхностной меры площади\/} $\sigma_{\tt d-1}^r$ на сфере  $ \partial \overline B(r)\subset \RR^{\tt d}$ и интегрируемой по  $\sigma_{\tt d-1}^r$ функции $U\colon \partial \overline B(r)\to \overline \RR$  её {\it среднее по сфере $ \partial \overline B(r)$} обозначаем  как \eqref{Cr}
\begin{equation}\label{CU}
{\sf C}_U(r):=\frac{1}{s_{\tt d-1}r^{\tt d-1}}\int_{\partial \overline B(r)}U\dd \sigma_{\tt d-1}^r.
\end{equation}
Пусть   $\mu$ --- мера Бореля на $\RR^{\tt d}$ и 
\begin{equation}\label{muyr} 
\mu_y^{\rad}(t)\underset{t\in \RR^+}{:=}\mu\bigl(\overline B_y(t) \bigr)\in \overline \RR^+
\end{equation}
--- {\it радиальная считающая функция меры $\mu$ с центром $y\in \RR^{\tt d}$.\/}
В случае центра $y=0$ нижний индекс $0$, как правило, не используем, как и  перед \eqref{{murad}N} для $\CC$. 

Неоднократно будет использоваться связанное с размерностью ${\tt d}\in \NN$ число 
\begin{equation}\label{kd0}
\widehat{\tt d}:=\max\{{\tt 1,d-2}\}=1+({\tt d-3})^+\in \NN.  
\end{equation}
 Для меры Бореля $\mu$  на шаре $\overline B(R)\subset \RR^{\tt d}$,
как и в \eqref{{murad}N}  для $\CC$,  
\begin{equation}\label{Ntt}
{\sf N}_{\mu}(r,R):={\widehat{\tt d}}\int_r^R \frac{\mu^{\rad}(t)}{t^{\tt d-1}}\dd t\in \overline \RR^+ \quad\text{при  $ 0\leq r<R\in \overline \RR^+$}
\end{equation}
--- {\it  радиальная разностная проинтегрированная считающая функция меры $\mu$}.

При ${\tt d=1}$ субгармонические функции --- это выпуклые функции. 
По субгармоническим функциям при ${\tt d=2}$, т.е. для $\CC$, вполне достаточно сведений из  \cite{Rans}, а для остальных  ${\tt d}>2$ ---  из \cite{HK}, \cite{Helms}, \cite{Landkof}. 

Для субгармонической функции $u\not\equiv -\infty$ в открытом шаре $B(R)$ 
через  
\begin{equation}\label{df:cm}
\varDelta_u\overset{\eqref{kd0}}{:=} \frac{1}{s_{\tt d-1}{\widehat{\tt d}}} {\bigtriangleup}  u,
\quad\text{где ${\bigtriangleup}$ --- {\it оператор Лапласа,}} 
\end{equation}
действующий в смысле теории обобщённых функций на $B(R)$, обозначаем её распределение масс, или {\it меру Рисса.}
Функция $u\colon \overline B(r)\to \RR\cup -\infty$ {\it субгармоническая на замкнутом шаре\/} $\overline B(r)\subset \RR^{\tt d}$ радиуса $r>0$, если   функция $u$ равна  сужению на $\overline B(r)$ какой-нибудь  субгармонической функции в открытом шаре $B(R)\subset \RR^{\tt d}$ некоторого  строго большего радиуса $R>r$.   
Для {\it субгармонической функции\/} $u\not\equiv -\infty$ на $\overline B(r)\subset \RR^{\tt d}$ через указанное сужение  с $B(R)$ на $\overline B(r)$ вполне корректно определена её  {\it мера Рисса\/}  $\varDelta_u$ на замкнутом шаре  $\overline B(r)$, полученная сужением с $B(R)$ меры Рисса из  \eqref{df:cm}.

Формально функция $U\colon \overline B(r) \to \overline \RR$   {\it $\delta$-субгармоническая на $\overline B(r)\subset \RR^{\tt d}$,} если она задана как разность $U=u-v$ пары  субгармонических функций $u$ и $v$ на $\overline B(r)$.
Различные  эквивалентные формы определения  функций, их корректность и основные свойства  исследуются в \cite{Arsove53}, \cite{Arsove53p}, \cite[2.8.2]{Azarin}, \cite{Gr}, \cite[3.1]{KhaRoz18}.

Две $\delta$-субгармонические функции $U=u-v\not\equiv \pm \infty$ и $U_1=u_1-v_1\not\equiv \pm \infty$, представленные разностями пар субгармонических функций $u,v,u_1,v_1\not\equiv -\infty$ на открытом множестве, {\it равны\/} на этом множестве, если $u+v_1=u_1+v$ на нём.

Такая функция {\it нетривиальна\/} и пишем $U\not\equiv \pm\infty$, если в её представлении 
$U=u-v$ как $u\not\equiv -\infty$, так и $v\not\equiv -\infty$. Для  $\delta$-субгармонической
функции $U\not\equiv \pm\infty$  на $\overline B(r)\subset \RR^{\tt d}$ корректно определён
 её {\it заряд Рисса\/} $\varDelta_U:=\varDelta_u-\varDelta_v$ как разность мер Рисса субгармонических  функций $u\not\equiv
-\infty$ и $v\not\equiv -\infty$.

\begin{definition}\label{defT}
{\it Разностной характеристикой Неванлинны\/} ${\boldsymbol T}_U(r,R)$ для 
$\delta$-су\-б\-г\-а\-р\-м\-о\-н\-и\-ч\-е\-с\-к\-ой функции  $U\not\equiv \pm\infty$  на  шаре $\overline B(R) \subset \RR^{\tt d}$ радиуса $R>0$
называем сумму  из \eqref{{rT}N}  в обозначениях и определениях \eqref{CU} и  \eqref{Ntt}:
\begin{equation}\label{{TTN}T}
{\boldsymbol T}_U(r,R)\overset{\eqref{{rT}N}}{:=}{\sf C}_{U^+}(R)+{\sf N}_{\varDelta_U^-}(r,R)\in \overline \RR^+,
\quad 0\leq r<R\in  \RR^+,
\end{equation} 
\end{definition}
Определение разностной характеристики Неванлинны ${\boldsymbol  T}_U$ можно дать и  иначе.  Для $\delta$-суб\-г\-а\-р\-м\-о\-н\-и\-ч\-е\-с\-к\-ой функции $U\not\equiv \pm\infty$ на шаре  $\overline B(R)$ с зарядом Рисса $\varDelta_U$ существуют {\it канонические   представления\/} $U=u_*-v_*$, где $u_*\not\equiv -\infty$  и $v_*\not\equiv -\infty$  --- субгармонические функции на $\overline B(R)$ с мерами Рисса соответственно 
$\varDelta_{u_*}=\varDelta_U^+:=\sup\{0,\varDelta_U\}$ --- {\it верхняя вариация\/} 
заряда Рисса $\varDelta_U$ и $\varDelta_{v_*}=\varDelta_U^-$. Канонические представления  определены с точностью до общего гармонического слагаемого. Из  равенства $U^+=\sup\{u_*-v_*, 0\}=\sup\{u_*,v_*\}-v_*$ имеем
\begin{equation*}
{\sf C}_{U^+}(R)={\sf C}_{\sup\{u_*,v_*\}}(R)-{\sf C}_{v_*}(R)
\quad\text{для всех  $0<R<+\infty$},
\end{equation*}
где функция $\sup\{u_*,v_*\}$ субгармоническая на $\overline B(R)$, 
а по  формуле Пуассона\,--\,Йенсена\,--\,При\-в\-а\-л\-о\-ва 
 \cite{Pri35}--\cite{Pri35II}, \cite[гл. II, \S~2]{Privalov}, \cite[3.7]{HK}
\begin{equation*}
{\sf N}_{\varDelta_{v_*}}(r,R)={\sf C}_{v_*}(R)-{\sf C}_{v_*}(r) 
\quad\text{для всех  $0<r<R<+\infty$}.
\end{equation*}
Сложение этих равенств даёт  равенство
\begin{equation}\label{TC}
{\boldsymbol T}_U(r,R)\overset{\eqref{{TTN}T}}{=}
{\sf C}_{\sup\{u_*,v_*\}}(R)-{\sf C}_{v_*}(r) \in  \RR^+,
\quad 0< r<R\in  \RR^+.
\end{equation} 
Функция $f\colon I\to \RR$ выпукла (соответственно вогнута) на открытом интервале $I\subset \RR$
относительно строго возрастающей непрерывной функции $k\colon I\to \RR$, если 
суперпозиция  $f\circ k^{-1}$ выпукла (соответственно вогнута) на образе $k(I)\subset \RR$.    

Зависящую от размерности пространства $\RR^{\tt d}$ строго  возрастающую непрерывную на $ \RR^+$ функцию 
\begin{equation}\label{kKd-2}
\Bbbk_{\tt d-2} \colon  t\underset{0<t\in \RR^+}{\longmapsto} \begin{cases}
\ln t  &\text{\it  при ${\tt d=2}$},\\
 -\dfrac{1}{t^{\tt d-2}} &\text{\it при ${\tt d>2}$,} 
\end{cases} 
\quad \Bbbk (0):=-\infty \in \overline \RR,
\end{equation}
часто будем записывать без нижнего индекса  ${\tt d-2}$ как просто $\Bbbk$, когда возможные значения размерности 
$2\leq {\tt d}\in \NN$ ясны из формулировок  или контекста. 

Средние  по сфере  \eqref{CU} для субгармонических функций --- возрастающие и выпуклые относительно 
$\Bbbk_{\tt d-2}$  \cite[теорема 2.6.8]{Rans}, \cite[3.9]{HK}. Таким образом, из представления  \eqref{TC}
для разностной характеристики Неванлинны сразу следует 
\begin{propos}\label{proT} Разностная характеристика Неванлинны ${\boldsymbol T}_U$ $\delta$-суб\-г\-а\-р\-м\-о\-н\-и\-ч\-е\-с\-к\-ой функции $U\not\equiv \pm \infty$ на шаре с центром в нуле положительна,
 возрастающая и выпуклая относительно $\Bbbk_{\tt d-2}$ по второй большей переменной, а также 
убывающая  и вогнутая относительно $\Bbbk_{\tt d-2}$ по  первой переменной. 
\end{propos}

\section{Модуль непрерывности  меры и  основная теорема}

\begin{definition}\label{defmc}
 {\it Модуль непрерывности\/} меры  Бореля 
 $\mu$ на $\RR^{\tt d}$ ---  функция 
\begin{equation}\label{hmuR}
{\sf h}_{\mu}\colon t\underset{t\in \RR^+}{\longmapsto} \sup\limits_{y\in \RR^{\tt d}}\mu \bigl(\overline B_y(t)\bigr)
\overset{\eqref{muyr} }{=}\sup\limits_{y\in \RR^{\tt d}}\mu_y^{\rad}(t)\in \overline \RR^+.  
\end{equation}
\end{definition}

\begin{propos}\label{proh}
Пусть   $0<r\in \RR^+$, а  $\mu$ ---  мера Бореля на замкнутом шаре $\overline B(r)\subset \RR^{\tt d}$ полной меры   
\begin{equation}\label{Mmu}
{\tt M}:=\mu \bigl(\overline B(r)\bigr)=\mu (\RR^{\tt d})\in \overline \RR^+.
\end{equation}  
 Тогда ${\sf h}_{\mu}$  --- возрастающая функция и 
\begin{subequations}\label{hR}
\begin{align}
{\sf h}_{\mu}(t)&\leq {\tt M}\quad\text{при всех $t\in  \RR^+$}, 
\tag{\ref{hR}$\leq$}\label{hRMl}
\\ 
{\sf h}_{\mu}(t)&\equiv {\tt M}\quad\text{при всех $t\geq r$}.
\tag{\ref{hR}$\equiv$}\label{h<M}
\end{align}
\end{subequations}
\end{propos}
\begin{proof}
Из  \eqref{Mmu} по определению \eqref{hmuR}  возрастание ${\sf h}_{\mu}$ и неравенство \eqref{hRMl} очевидны. 
Согласно включению $\overline B(r)\subset \overline B(t)$  при $t\geq r$ имеем
$${\sf h}_{\mu}(t)\geq {\sf h}_{\mu}(r)\overset{\eqref{hmuR}}{=}
\sup\limits_{y\in \RR^{\tt d}}\mu \bigl(\overline B_y(t)\bigr)\geq
 \mu\bigl(\overline B(r)\bigr)\overset{\eqref{Mmu}}{=}{\tt M}
\quad\text{при всех  $t\geq r$},
$$ 
что вместе с \eqref{hRMl} даёт  \eqref{h<M}. 
\end{proof}

По теории мер и интегрирования  придерживаемся терминологии, но не обозначений,  
из монографий 
\cite{Federer}, 
\cite{EG}, 
\cite[Введение, \S~1]{Landkof}.
К примеру, как и в \cite{EG}, если интеграл от функции  по мере $\mu$ 
существует и принимает значение из $\overline \RR$, то эту функцию называем {\it $\mu$-интегрируемой,} а если этот интеграл ещё и конечен, т.е. со значением в $\RR$, то эту функцию  называем  {\it $\mu$-суммируемой.} 

\begin{maintheorem}\label{th3}
Пусть $0<r<R\in \RR^+$, а  $\mu$ ---  мера Бореля на замкнутом шаре $\overline B(r)\subset \RR^{\tt d}$ конечной полной меры  ${\tt M}<+\infty$ из \eqref{Mmu} и модулем непрерывности ${\sf h}_{\mu}$ из \eqref{hmuR}. 
Если выполнено условие
\begin{equation}\label{k0CR}
\int_0 \frac{{\sf h}_{\mu}(t)}{t^{\tt d-1}}\dd t< +\infty,
\end{equation} 
 то  $\mu$-суммируема любая  $\delta$-суб\-г\-а\-р\-м\-о\-н\-и\-ч\-е\-с\-к\-ая   функция   $U\not\equiv \pm \infty$ на $\overline B(R)$ и
\begin{subequations}\label{UR}
\begin{align}
\int_{\overline B(r)} U^+\dd \mu &\leq  
 A_{\tt d}(r,R){\boldsymbol  T}_U( r, R)
 \biggl({\tt M}+\int_0^{R+r}\frac{{\sf h}_{\mu}(t)}{t^{{\tt d}-1}}\dd t\biggr),
\tag{\ref{UR}T}\label{{UR}T}\\
\intertext{где $r$ в ${\boldsymbol  T}_U( r, R)$ можно заменить на любое  число  $r_0\in [0,r]$, а}
 A_{\tt d}(r,R)&:=2\Bigl(\frac{R+r}{R-r}\Bigr)^{\tt d-1}\max\Bigl\{1, (R-r)^{\tt d-2}\Bigr\}.
\tag{\ref{UR}A}\label{{UR}A}
\end{align}
\end{subequations}

\end{maintheorem}
\begin{remark} В силу \eqref{hRMl} и конечности полной меры ${\tt M}$ {\tiny }  условие \eqref{k0CR}  в рамках основной теоремы \eqref{k0CR}  можно переписать в эквивалентной  форме
 \begin{equation}\label{k0CRR}
\int_0^{R+r} \frac{{\sf h}_{\mu}(t)}{t^{\tt d-1}}\dd t
\overset{\eqref{kKd-2}}{=}\widehat{\tt d}\int_0^{R+r} {\sf h}_{\mu}(t)\dd \Bbbk_{\tt d-2}(t)< +\infty,
\end{equation} 
что  обеспечивает существование пределов 
\cite[предложение 2.2]{KhaShm19}
\begin{equation}\label{es:00lp0} 
\lim\limits_{0<t\to 0} {\sf h}_{\mu}(t)=0,\quad 
\quad \lim\limits_{0<t\to 0}{\sf h}_{\mu}(t)\Bbbk_{\tt d-2}( t)=0. 
\end{equation} 
\end{remark}

\begin{remark} 
Условие  \eqref{k0CR} означает, что функция $t\mapsto {\sf h}_{\mu}(t)t^{\tt 2-d}$ удовлетворяет классическому условию Дини. В частности, при ${\tt d}=2$, т.е. на $\CC$, условие \eqref{k0CR} --- это  уже в точности {\it условие Дини\/} в нуле
\begin{equation}\label{k0CR2}
\int_0 \frac{{\sf h}_{\mu}(t)}{t}\dd t< +\infty,
\end{equation} 
а заключительное неравенство \eqref{UR} упрощается до 
\begin{equation}\label{UR2}
\int_{\overline B(r)} U^+\dd \mu \leq  2\frac{R+r}{R-r} {\boldsymbol  T}_U( r, R)
\biggl({\tt M}+ \int_0^{R+r}\frac{{\sf h}_{\mu}(t)}{t}\dd t\biggr),
\end{equation}
где $r$ в ${\boldsymbol  T}_U( r, R)$  можно заменить на любое  $r_0\in [0,r]$.
Кроме того, для мероморфной функции $f\neq 0,\infty$ неравенство \eqref{UR2} для $\delta$-суб\-г\-а\-р\-м\-о\-н\-и\-ч\-е\-с\-к\-ой функции $U:=\ln |f|\not\equiv \pm\infty$ согласно \eqref{csTTNN} можно записать в традиционных обозначениях \eqref{{TN}T}, \eqref{{TN}N} в виде 
\begin{equation}\label{UR2f}
\int_{\overline B(r)} \ln^+|f|\dd \mu 
\\
\leq  2\frac{R+r}{R-r} \Bigl(T(R, f)-N(r, f)\Bigr)
\biggl({\tt M}+ \int_0^{R+r}\frac{{\sf h}_{\mu}(t)}{t}\dd t\biggr),
\end{equation}
где $r$ в $N( r,f)$ из правой части  можно заменить на любое  $r_0\in [0,r]$ с учётом \eqref{{actR}0} и $\ln 0\overset{\eqref{kKd-2}}{=}-\infty$. В частности, 
если  $r\geq 1$ или при отсутствии у $f$ полюса в нуле, т.е. при  $f(0)\in \CC$, имеем $N(r, f)\geq 0$ и 
\begin{equation}\label{UR2fr}
\int_{\overline B(r)} \ln^+|f|\dd \mu \overset{\eqref{UR2f}}{\leq} 2\frac{R+r}{R-r} T(R, f)
\biggl({\tt M}+ \int_0^{R+r}\frac{{\sf h}_{\mu}(t)}{t}\dd t\biggr).
\end{equation}
\end{remark}

\begin{proof}[основной теоремы]
При $\mu= 0$ всё очевидно, поэтому далее ${\tt M}\overset{\eqref{Mmu}}{>}0$. 
Пусть $u\not\equiv -\infty$ и $v\not\equiv -\infty$  --- пара субгармонических функций на $\overline B(R)$, представляющих $U=u-v$. Применяя формулу Пуассона\,--\,Йенсена  для шара  $B(R)$ \cite[(3.7.3)]{HK} для всех значений  $u(x)\neq -\infty$ и $v(x)\neq -\infty$ и вычитая одно равенство из другого, получаем 
\begin{multline}\label{Ux}
U(x)=\frac{1}{{\sf s}_{\tt d-1}}\int_{\partial \overline B(R)} \frac{R^2-|x|^2}{R|y-x|^{\tt d}}U(y)\dd \sigma_{\tt d-1}^R (y)\\
+\int_{B(R)}\Biggl( \Bbbk\biggl(\Bigl|\frac{R}{|y|}y-\frac{|y|}{R}x\Bigr| \biggr)-\Bbbk\bigl(|y-x|\bigr)\Biggr)\dd \varDelta_U(y)
\end{multline}
при всех $x\in \overline B(r)\setminus E$ вне борелевского {\it полярного множества\/} 
\begin{equation}\label{E}
E=\bigl\{x\in B(R)\bigm| u(x)=-\infty\bigr\}\bigcup \bigl\{x\in B(R)\bigm| v(x)=-\infty\bigr\}.
\end{equation} 
Для {\it положительного ядра Пуассона\/} \cite[1.5.4]{HK} имеем 
\begin{equation*}
\frac{1}{{\sf s}_{\tt d-1}} \frac{R^2-|x|^2}{R|y-x|^{\tt d}}\leq 
\frac{1}{{\sf s}_{\tt d-1}} \frac{R+r}{R(R-r)^{\tt d-1}}
\quad\text{при $y\in \partial \overline B(R)$ и $x\in \overline B(r)$},
\end{equation*}
а для {\it положительной функции Грина\/}  \cite[теорема 1.10]{HK} ---
\begin{equation*}
\Bbbk\biggl(\Bigl|\frac{R}{|y|}y-\frac{|y|}{R}x\Bigr| \biggr)-\Bbbk\bigl(|y-x|\bigr)\leq
\Bbbk(R+r)-\Bbbk\bigl(|y-x|\bigr)
\quad\text{при $y\in B(R)$ и $x\in \overline B(r)$}.
\end{equation*}
Таким образом, из \eqref{Ux} следует 
\begin{equation}\label{U+B}
U^+(x)\leq \frac{R^{\tt d-2}(R+r)}{(R-r)^{\tt d-1}}{\sf C}_{U^+}(R)
+\int_{B(R)}\Bigl(\Bbbk(R+r)-\Bbbk\bigl(|y-x|\bigr)\Bigr) \dd\varDelta_U^-(y)
\end{equation}
для всех $x\overset{\eqref{E}}{\in} \overline B(r)\setminus E$.
На $\overline B(r)\setminus E$ функция $U^+$ всюду определена как положительная часть разности  $u-v$
 полунепрерывных сверху функций $u$ и $v$ со значениями в $\RR$ и является измеримой по сужению меры $\mu$ на 
 $\overline B(r)\setminus E$, а  $\mu$-интегрируемость функции $U^+$ на всём шаре
$\overline B(r)$ обеспечивает 
\begin{lemma}\label{lem1}
Для меры Бореля  $\mu$ на $\overline B(r)$ конечной полной меры \eqref{Mmu} при условии 
\eqref{k0CR} $\mu$-мера борелевского полярного подмножества в $\overline B(r)$
равна нулю.
\end{lemma}
\begin{proof}[леммы \ref{lem1}] Условие \eqref{k0CR}, т.е. \eqref{k0CRR} вместе с \eqref{es:00lp0}, по  теореме Фростмана \cite[теорема II.1]{Carleson}, \cite[теорема 5.1.12]{HedbergAdams} означает, что 
${\sf h}_{\mu}$-мера Хаусдорфа  мажорирует меру $\mu$, и, согласно  \cite[теорема 5.13]{HK}, \cite[IV, теорема 1]{Carleson}, любое ограниченное борелевское полярное множество в $\RR^{\tt d}$, т.е. множество  нулевой ёмкости, 
 имеет нулевую  ${\sf h}_{\mu}$-меру Хаусдорфа. Отсюда и $\mu$-мера этого полярного множества равна нулю.
\end{proof} 
Теперь мы вправе интегрировать  по мере $\mu$ неравенство \eqref{U+B} и использовать для  правой части теорему Фубини  о повторных интегралах: 
\begin{multline*}
\int_{\overline B(r)}U^+\dd \mu
\leq \int_{\overline B(r)}U^+\frac{R^{\tt d-2}(R+r)}{(R-r)^{\tt d-1}}{\sf C}_{U^+}(R)\dd \mu(x)
\\+\int_{\overline B(r)}\int_{B(R)}\Bigl(\Bbbk(R+r)-\Bbbk\bigl(|y-x|\bigr)\Bigr) \dd\varDelta_U^-(y)\dd \mu (x)\\
=\frac{R^{\tt d-2}(R+r)}{(R-r)^{\tt d-1}}{\sf C}_{U^+}(R)\mu\bigl(\overline B(r)\bigr)
\\+\int_{B(R)}\int_{\overline B(r)}\Bigl(\Bbbk(R+r)-\Bbbk\bigl(|y-x|\bigr)\Bigr) \dd \mu (x)\dd\varDelta_U^-(y)
\\
\overset{\eqref{Mmu}}{=}
\frac{R^{\tt d-2}(R+r)}{(R-r)^{\tt d-1}}{\sf C}_{U^+}(R)\,{\tt M}
+\int_{B(R)}\int_0^r\Bigl(\Bbbk(R+r)-\Bbbk(t)\Bigr) \dd \mu_y^{\rad} (t)\dd\varDelta_U^-(y).
\end{multline*}
Здесь для последнего внутреннего интеграла Римана\,--\,Стилтьеса интегрированием по частям получаем 
\begin{multline*}
\int_0^r\Bigl(\Bbbk(R+r)-\Bbbk(t)\Bigr) \dd \mu_y^{\rad} (t)
\leq \int_0^{R+r}\Bigl(\Bbbk(R+r)-\Bbbk(t)\Bigr) \dd \mu_y^{\rad} (t)\\
\leq \int_0^{R+r} \mu_y^{\rad} (t)\dd \Bbbk(t)
\overset{\eqref{hmuR}}{\leq}
\int_0^{R+r}{\sf h}_{\mu}(t)\dd \Bbbk(t)
\overset{\eqref{es:00lp0}}{=}
\widehat{\tt d}\int_0^{R+r}\frac{{\sf h}_{\mu}(t)}{t^{\tt d-1}}\dd t.
\end{multline*}
Таким образом, из последних двух цепочек (не)равенств имеем  
\begin{multline*}
\int_{\overline B(r)}U^+\dd \mu
\leq \frac{R^{\tt d-2}(R+r)}{(R-r)^{\tt d-1}}{\sf C}_{U^+}(R)\,{\tt M}
+
\int_{B(R)}\widehat{\tt d}\int_0^{R+r}\frac{{\sf h}_{\mu}(t)}{t^{\tt d-1}}\dd t
\dd\varDelta_U^-(y)\\
=\frac{R^{\tt d-2}(R+r)}{(R-r)^{\tt d-1}}{\sf C}_{U^+}(R)\,{\tt M}
+(\varDelta_U^-)^{\rad}(R)\widehat{\tt d}\int_0^{R+r}\frac{{\sf h}_{\mu}(t)}{t^{\tt d-1}}\dd t. 
\end{multline*}
Рассуждения и выкладки проводились для $0<r<R\in \RR^+$, поэтому последнее  
верно и для любого $R_*\in (r,R)$, что позволяет переписать его как
\begin{equation}\label{R*}
\int_{\overline B(r)}U^+\dd \mu
\leq \frac{R_*^{\tt d-2}(R_*+r)}{(R_*-r)^{\tt d-1}}{\sf C}_{U^+}(R_*)\,{\tt M}
+(\varDelta_U^-)^{\rad}(R_*)\widehat{\tt d}\int_0^{R_*+r}\frac{{\sf h}_{\mu}(t)}{t^{\tt d-1}}\dd t. 
\end{equation}

\begin{lemma}\label{lemd} Пусть 
$\varDelta$ --- конечная мера  на $\overline B(R)$ и $0<R_*<R$.
 Тогда 
\begin{equation}\label{dBr}
\varDelta\bigl(\overline B(R_*)\bigr)\overset{\eqref{muyr}}{=:}
\varDelta^{\rad}(R_*)
\leq 
\frac{R^{\tt d-1}}{{\widehat{\tt d}}(R-R_*)} 
{\sf N}_{\varDelta} (R_*,R).
\end{equation}
\end{lemma}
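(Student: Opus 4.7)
The plan is to bound ${\sf N}_{\varDelta}(R_*,R)$ from below by replacing the integrand in its definition \eqref{Ntt} by a constant pointwise minorant on $[R_*,R]$, and then read off \eqref{dBr} by rearrangement.

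First, I would note that the function $t\mapsto \varDelta^{\rad}(t)=\varDelta\bigl(\overline B(t)\bigr)$ is non-decreasing on $\RR^+$, since $\overline B(s)\subset \overline B(t)$ whenever $s\leq t$. Hence $\varDelta^{\rad}(t)\geq \varDelta^{\rad}(R_*)$ for every $t\in [R_*,R]$. Combined with the trivial estimate $t^{{\tt d}-1}\leq R^{{\tt d}-1}$ on the same interval, this gives the pointwise bound
\begin{equation*}
\frac{\varDelta^{\rad}(t)}{t^{{\tt d}-1}}\geq \frac{\varDelta^{\rad}(R_*)}{R^{{\tt d}-1}}, \qquad t\in [R_*,R].
\end{equation*}

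Next, integrating over $[R_*,R]$ and multiplying by $\widehat{\tt d}$, the definition \eqref{Ntt} yields
\begin{equation*}
{\sf N}_{\varDelta}(R_*,R)\geq \widehat{\tt d}\,\varDelta^{\rad}(R_*)\,\frac{R-R_*}{R^{{\tt d}-1}},
\end{equation*}
which is exactly the claimed inequality \eqref{dBr} after dividing by the positive coefficient $\widehat{\tt d}(R-R_*)/R^{{\tt d}-1}$; the hypothesis $R_*<R$ makes this division legitimate.

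There is no real obstacle here: the whole argument is a one-line minorization of the defining integral. The estimate is crude, since one replaces $\varDelta^{\rad}(t)$ by its infimum and $t^{{\tt d}-1}$ by its supremum on $[R_*,R]$, but this crudeness is precisely what produces the factor $R^{{\tt d}-1}/(R-R_*)$ on the right-hand side of \eqref{dBr}, and it is sharp enough for the subsequent use in \eqref{R*}.
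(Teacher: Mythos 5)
Your proof is correct and is essentially the same argument as the paper's: both rest on the monotonicity of $t\mapsto\varDelta^{\rad}(t)$ together with the bound $t^{{\tt d}-1}\leq R^{{\tt d}-1}$ on $[R_*,R]$, the paper merely packaging it as ``$\varDelta^{\rad}(R_*)$ is at most the $t^{-({\tt d}-1)}\dd t$-weighted mean of $\varDelta^{\rad}$ over $[R_*,R]$'' before estimating the denominator. No difference in substance.
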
 
\begin{proof}[леммы \ref{lemd}] В силу возрастания считающей функции $\varDelta^{\rad}$
\begin{equation*}
\varDelta^{\rad}(R_*)\leq \int_{R_*}^{R}\frac{\varDelta^{\rad}(t)}{t^{\tt d-1}}\dd t\biggm/
\int_{R_*}^{R} \frac{\dd t}{t^{\tt d-1}}
\overset{\eqref{Ntt}}{\leq} \frac{1}{{\widehat{\tt d}}}{\sf N}_{\varDelta} (r_*,R)\biggm/ 
 \frac{1}{R^{\tt d-1}}\int_{R_*}^R\dd t.
\end{equation*}
\end{proof}
С помощью  леммы \ref{lemd} можем продолжить неравенство \eqref{R*} как 
\begin{multline*}
\int_{\overline B(r)}U^+\dd \mu
\leq \frac{R_*^{\tt d-2}(R_*+r)}{(R_*-r)^{\tt d-1}}{\sf C}_{U^+}(R_*)\,{\tt M}
+\frac{R^{\tt d-1}}{R-R_*} {\sf N}_{\varDelta} (R_*,R)\int_0^{R_*+r}\frac{{\sf h}_{\mu}(t)}{t^{\tt d-1}}\dd t\\
 \leq \frac{R_*^{\tt d-2}(R_*+r)}{(R_*-r)^{\tt d-1}}{\sf C}_{U^+}(R_*)\,{\tt M}
+\frac{R^{\tt d-1}}{R-R_*} {\sf N}_{\varDelta} (r,R)\int_0^{R+r}\frac{{\sf h}_{\mu}(t)}{t^{\tt d-1}}\dd t\\
\leq\max\biggl\{\frac{R_*^{\tt d-2}(R_*+r)}{(R_*-r)^{\tt d-1}}, \frac{R^{\tt d-1}}{R-R_*}\biggr\}
\cdot \max\Bigl\{{\sf C}_{U^+}(R_*), {\sf N}_{\varDelta} (r,R)\Bigr\}
\biggl({\tt M}+\int_0^{R+r}\frac{{\sf h}_{\mu}(t)}{t^{\tt d-1}}\dd t\biggr),
\end{multline*}
где по определению \ref{defT}  из возрастания по предложению \ref{proT} разностной характеристики Неванлинны по второму аргументу 
для последнего максимума имеем 
\begin{equation*}
\max\Bigl\{{\sf C}_{U^+}(R_*), {\sf N}_{\varDelta} (r,R)\Bigr\}\overset{\eqref{{TTN}T}}{\leq} 
\max\Bigl\{{\boldsymbol T}_{U}(r,R_*), {\boldsymbol T}_{U}(r,R)\Bigr\}= {\boldsymbol T}_{U}(r,R), 
\end{equation*}
а при выборе $R_*:=\frac{R+r}{2}$ для первого максимума в правой части 
\begin{multline*}
\max\biggl\{\frac{R_*^{\tt d-2}(R_*+r)}{(R_*-r)^{\tt d-1}}, \frac{R^{\tt d-1}}{R-R_*}\biggr\}
=\max \biggl\{\frac{(R+r)^{\tt d-2}(R+2r)}{(R-r)^{\tt d-1}}, \frac{2R^{\tt d-1}}{R-r}\biggr\}\\
\leq 2\Bigl(\frac{R+r}{R-r}\Bigr)^{\tt d-1}\max\Bigl\{1, (R-r)^{\tt d-2}\Bigr\}
\overset{\eqref{{UR}A}}{=} A_{\tt d}(r,R),
\end{multline*}
что и доказывает неравенство  \eqref{{UR}T} вместе с конечностью правой части в нём в силу условия 
\eqref{k0CR} в форме \eqref{k0CRR}. Конечность правой части   \eqref{{UR}T}  означает и $\mu$-суммируемость функции $U^+$.
\end{proof}

\end{fulltext}

\end{document}